\theoremstyle{plain}
\newtheorem{theorem}{Theorem}[section]
\newtheorem{lemma}[theorem]{Lemma}
\theoremstyle{definition}
\def \Z {{\mathbb{Z}}}
\def \N {{\mathbb{N}}}
\bmdefine{\be}{e}
\bmdefine{\bg}{g}
\bmdefine{\bk}{k}
\bmdefine{\bm}{m}
\bmdefine{\bp}{p}
\bmdefine{\bs}{s}
\bmdefine{\bt}{t}
\bmdefine{\bw}{w}
\begin{document}

\title{Products of integers with few nonzero digits}

\author[1]{Hajime Kaneko}
\affil[1]{{\small{
Institute of Mathematics and
Research Core for Mathematical Sciences\\
 University of Tsukuba\\
 1-1-1, Tennodai, Tsukuba\\
 Ibaraki, 305-8571\\
 JAPAN}}
}

\author[2]{Thomas Stoll}
\affil[2]{{\small{
Universit\'e de Lorraine and CNRS,
Institut \'Elie Cartan de Lorraine, UMR 7502\\
54506 Vand\oe uvre-l\`es-Nancy\\
FRANCE}}}

\date{}
\maketitle
\begin{abstract}

Let $s(n)$ be the number of nonzero bits in the binary digital expansion of the integer $n$. We study, for fixed $k,\ell,m$, the Diophantine system
$$
  s(ab)=k, \quad s(a)=\ell,\quad \mbox{and }\quad s(b)=m,
$$
in odd integer variables $a,b$. When $k=2$ or $k=3$, we establish a bound on $ab$ in terms of $\ell$ and $m$. While such a bound does not exist in the case of $k=4$, we give an upper bound for $\min\{a,b\}$ in terms of $\ell$ and $m$.

\bigskip
\noindent\textbf{Keywords:} sum of digits; digital expansion; factors. 

\medskip
\noindent\textbf{MSC 2020:} 11A63 (primary), 11B83 (secondary)

\end{abstract}

\section{Introduction}\label{secintro}

The multiplicative structure of integers and their digital representation seem to be unrelated in many aspects. In recent years a number of results have been obtained with respect to this phenomenon via a study of distribution properties (distribution mod 1, distribution in arithmetic progressions etc.), see in particular~\cite{MMR19, MR10} and their extensive lists of references for the distribution properties of the digits of primes. Only very few results are known that relate in a concrete way the digital structure of integers to their multiplicative decomposition. This is mainly due to the difficulty of following the multiple carry propagations in additions and multiplications. The aim of the present paper is to investigate via a combinatorial approach the relationship of the binary sum of digits function, i.e. the bits-counting function, of products of integers to those of its factors. 

\medskip

For a positive integer $n$, let $s(n)$ denote the number of nonzero digits in the binary expansion of $n$ (e.g., $s(23)=s((10111)_2)=4)$. We are interested in the Diophantine system with odd integer variables $a,b$,
\begin{equation}\label{system}
  s(ab)=k, \quad s(a)=\ell,\quad \mbox{and }\quad s(b)=m,
\end{equation}
where $k,\ell, m\geq 2$ are arbitrary but fixed integers. Natural questions are the following: \textit{If $k$ is fixed, are $a$ and $b$ bounded in terms of $\ell$ and $m$?
If yes, what would such bounds look like?} 

\medskip

Since $s(ab)\leq s(a) s(b)$ for all integers $a, b$ (carry propagations can only cancel out nonzero bits but never create additional ones), it is immediate that we need $k\leq \ell m$ as a necessary condition for the existence of solutions in~(\ref{system}). However, it is far from clear for which triples $(k,\ell,m)$ such solutions exist, or even more generally, whether there are finitely of infinitely many solutions in odd $a,b$ for the system~(\ref{system}) for a given triple $(k,\ell,m)$. Part of the motivation to study~(\ref{system}) also comes from a paper of Hare, Laishram, Stoll~\cite{HLS11} where the authors studied the solution set of the equation $s(a^2)=s(a)=k$, which is a particular instance of~(\ref{system}). For example, they showed that $s(a^2)=s(a)=8$ only allows finitely many odd solutions $a$, whereas $s(a^2)=s(a)=12$ has infinitely many odd solutions $a$. Note also, that due to a classical result of Stolarsky~\cite{St78}, we have $$\liminf_{a\to\infty} s(a^2)/s(a)=0,$$ so that it may not be too rare to have integers $a$ such that $a^2$ has a much lower number of nonzero bits than the integer $a$ itself (the results hold true also for higher powers and in a more general context, see~\cite{HLS11-2, Me05, Me15, MS14, Sa15}). Before going any further, let us first mention several related results for the nonzero bits of powers of integers from the literature that we will translate into our language. 

\medskip

A direct elementary calculation shows that $s(a^2)=2$ implies $a=3$. Larger values already demand more sophisticated tools. Szalay~\cite{Sz02} showed that the only solutions of $s(a^2)=3$ are $a=2^n+1$ ($n\geq 1$), $a=7$ and $a=23$, and his proof is based on a deep result of Beukers~\cite{Be81} on the generalized Ramanujan--Nagell equation. Various other results are known for other powers of integers. We mention in particular the results of Corvaja and Zannier~\cite{CZ13}, and Bennett, Bugeaud and Mignotte~\cite{BBM13}, who showed that for all $d$ there are only finitely many solutions of $s(a^d)=4$ and that for all $d\geq 5$ the equation $s(a^d)=4$ has no solution. The proofs are based on the Subspace Theorem, linear forms in logarithms and Pad\'e approximations. To get an idea about the difficulty of these innocent looking problems, let us mention that there is so far no method at disposal to decide whether the only odd solutions to $s(a^2)=4$ are $a = 13, 15, 47, 111$. For more on the problem of powers with few nonzero digits, see~\cite{BBM12, BB14, Lu04} and the references given therein. For a result on the digits of smooth numbers, see~\cite{BK18}. The occurrence of an additional variable $b$ in~(\ref{system}) adds more freedom and more possible solutions. 

\medskip

The system~(\ref{system}), for $k=2$, can also be seen as an investigation of the digits of the factors of the famous Fermat numbers $2^n+1$. These are exactly those integers $n$ with $s(n)=2$, so writing $n=ab$ and considering~(\ref{system}) leads to the question of the digital expansion of the factors of Fermat numbers. The problem to factorize $2^n+1$ is a notorious difficult and classical problem in computational number theory (see Brillhart, Lehmer, Selfridge~\cite{BLS75}) and the size of the largest prime factor is of considerable interest in number theory, too (see Stewart~\cite{St77}). The well-known, still widely open question concerning Fermat numbers is to know whether $s(p)=2$ has infinitely many solutions in primes $p$. 

\medskip

In the present paper we tackle the cases $k=2, 3, 4$ for the system~(\ref{system}). In particular, we show that for $k=2, 3$ the system~(\ref{system}) only has finitely many (in principle, effectively computable) solutions for fixed $\ell$ and $m$, while the situation changes for $k=4$.

\medskip

The structure of the paper is as follows. In Section~\ref{mainsec} we state our main results. We then proceed directly to the proofs (Sections~\ref{th1sec},~\ref{th2sec},~\ref{th3sec} and~\ref{th4sec}). Our combinatorial key lemma appears in Section~\ref{th1sec}, it will be exploited in several ways throughout the paper.

\section{Main results}\label{mainsec}

Our first main result deals with the case $k=2$ for~(\ref{system}).
\begin{theorem}\label{th1}
Let $\ell, m\geq 2$ be integers, and $a, b\geq 1$ be odd integers with 
$s(a)=\ell$ and $s(b)=m$. If $s(ab)=2$, then
\[
ab<2^{-4+2\ell m}.
\]
\end{theorem}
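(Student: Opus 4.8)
The plan is to turn the hypothesis $s(ab)=2$ into an estimate for a single exponent. Since $ab$ is odd and has exactly two nonzero bits, $ab=2^{u}+1$ for some integer $u\ge 1$, and the claimed inequality $ab<2^{-4+2\ell m}$ follows once $u\le 2\ell m-5$ (then $2^{u}+1\le 2^{2\ell m-5}+1<2^{2\ell m-4}$, using $\ell m\ge 4$). Write $2^{\alpha}$ and $2^{\beta}$ for the leading powers of two of $a$ and $b$, so $2^{\alpha}\le a<2^{\alpha+1}$ and $2^{\beta}\le b<2^{\beta+1}$, and set $a_{0}=a-2^{\alpha}$, $b_{0}=b-2^{\beta}$. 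Since $a$ and $b$ each have at least two nonzero bits, $a_{0},b_{0}$ are odd with $1\le a_{0}<2^{\alpha}$, $1\le b_{0}<2^{\beta}$, $s(a_{0})=\ell-1$ and $s(b_{0})=m-1$.

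First I would pin down $u$. Expanding $ab=(2^{\alpha}+a_{0})(2^{\beta}+b_{0})=2^{\alpha+\beta}+(2^{\alpha}b_{0}+2^{\beta}a_{0}+a_{0}b_{0})$ and bounding the bracket crudely (it is at least $2^{\alpha}+2^{\beta}+1$ and less than $3\cdot 2^{\alpha+\beta}$) gives $2^{\alpha+\beta}<ab<2^{\alpha+\beta+2}$, hence $u\in\{\alpha+\beta,\alpha+\beta+1\}$. The value $u=\alpha+\beta$ is excluded for $\ell,m\ge 2$, since it would force $ab_{0}=2^{\alpha+\beta}+1-2^{\beta}a=2^{\beta}(2^{\alpha}-a)+1\le -2^{\beta}+1<0$, impossible. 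Therefore $u=\alpha+\beta+1$.

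The combinatorial heart is a complementation identity, which I would isolate as the key lemma: \emph{if $x$ is odd with leading power $2^{\gamma}$ and $s(x)=r$, then $s(2^{\gamma+1}-x)=\gamma+2-r$.} Indeed $2^{\gamma+1}-x=(2^{\gamma+1}-1)-(x-1)$; the minuend is a solid block of ones in positions $0,\dots,\gamma$, while $x-1$ (obtained from $x$ by clearing its unit bit) has its $r-1$ nonzero bits among positions $1,\dots,\gamma$, so the subtraction produces no borrow and merely deletes those $r-1$ bits, leaving $\gamma+1-(r-1)=\gamma+2-r$. Now, from $u=\alpha+\beta+1$ and $b=2^{\beta}+b_{0}$,
\[
  ab_{0}=2^{u}+1-2^{\beta}a=2^{\beta}(2^{\alpha+1}-a)+1 .
\]
Since $2^{\alpha+1}-a<2^{\alpha}$, every nonzero bit of $2^{\beta}(2^{\alpha+1}-a)$ sits at a position $\ge\beta\ge 1$, so the trailing $+1$ causes no interference and $s(ab_{0})=s(2^{\alpha+1}-a)+1$, which by the key lemma (applied with $x=a$) equals $\alpha-\ell+3$. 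Combining with the submultiplicativity $s(ab_{0})\le s(a)s(b_{0})=\ell(m-1)$ gives $\alpha\le \ell m-3$; the symmetric computation — peeling the top bit off $a$ rather than $b$, so that $a_{0}b=2^{\alpha}(2^{\beta+1}-b)+1$ — yields $\beta\le \ell m-3$. Hence $u=\alpha+\beta+1\le 2\ell m-5$, which completes the proof.

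I expect the only real obstacle to be the two spots where carries and borrows must be controlled: identifying $u=\alpha+\beta+1$ exactly rather than merely up to an additive constant, and the borrow-free evaluation of $s(2^{\gamma+1}-x)$ in the key lemma, which works precisely because $2^{\gamma+1}-1$ is a run of ones covering exactly the support of $x-1$. Everything else is submultiplicativity of $s$ together with routine bookkeeping.
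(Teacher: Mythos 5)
Your proof is correct, but it follows a genuinely different route from the paper. The paper's argument runs everything through its combinatorial key lemma (Lemma~\ref{keylemma}): since $\sum_{(i,j)\neq(0,0)}2^{a_i+b_j}=2^x$ has a single nonzero bit, all the cross-exponents $a_i+b_j$ lie within $\ell m-3$ of each other; combined with $a_1=b_1$ this gives $a_1\le \ell m-3$ and then $a_{\ell-1}+b_{m-1}\le 2\ell m-6$. You instead pin the top exponent down exactly, $ab=2^{\alpha+\beta+1}+1$ with $2^{\alpha},2^{\beta}$ the leading powers of $a,b$ (your exclusion of $u=\alpha+\beta$ via the sign of $ab_0$ is fine), and then use the exact complementation identity $s(2^{\gamma+1}-x)=\gamma+2-s(x)$ for odd $x$ with leading exponent $\gamma$: writing $ab_0=2^{\beta}\bigl(2^{\alpha+1}-a\bigr)+1$ and $a_0b=2^{\alpha}\bigl(2^{\beta+1}-b\bigr)+1$ (both carry-free evaluations check out), the subadditivity/submultiplicativity bound $s(ab_0)\le \ell(m-1)$ and $s(a_0b)\le(\ell-1)m$ yields $\alpha\le \ell m-3$ and $\beta\le \ell m-3$, hence $ab=2^{\alpha+\beta+1}+1<2^{2\ell m-4}$. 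All steps are sound, and your version even gives slightly more structural information, namely the individual bounds $a,b<2^{\ell m-2}$ rather than only a bound on the product. The trade-off is that your mechanism is tailored to $s(ab)=2$: the rigidity you exploit (that $ab-2^{\beta}a$ equals a shifted complement plus one) degrades once $ab$ has three or more bits, whereas the paper's Lemma~\ref{keylemma} is exactly the tool it reuses, with set decompositions, for the $k=3$ and $k=4$ theorems.
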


There is an obvious infinite solution set for $k=3$ and $\ell=m=2$ for~(\ref{system}), namely, $a=b=2^{c}+1$ for $c\geq 1$. Avoiding this case, we get an analogous result to Theorem~\ref{th1} for $k=3$.

\begin{theorem}\label{th2}
Let $\ell, m\geq 2$ be integers with $\max\{\ell, m\}\geq 3$. Moreover, let $a, b\geq 1$ be odd integers with
$s(a)=\ell$ and $s(b)=m$.  If $s(ab)=3$, then
\begin{equation}\label{main}
ab<2^{-13+4 \ell m}.
\end{equation}
\end{theorem}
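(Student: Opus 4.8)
The plan is to follow the strategy of the proof of Theorem~\ref{th1}, carrying along the one extra surviving bit. First I record the shape of the product: since $ab$ is odd and $s(ab)=3$, there are integers $u>v\ge 1$ with
\[
ab = 2^u + 2^v + 1 .
\]
Because $2^u\le ab<2^{u+1}$, it suffices to bound the top exponent $u$, and in fact to prove $u\le -14+4\ell m$ together with a direct inspection of the finitely many cases in which $ab$ is small; this turns \eqref{main} into a statement about $u$. Writing $a=\sum_{i=1}^{\ell}2^{\alpha_i}$ and $b=\sum_{j=1}^{m}2^{\beta_j}$ with $0=\alpha_1<\cdots<\alpha_\ell$ and $0=\beta_1<\cdots<\beta_m$ (the lowest exponent is $0$ because $a$ and $b$ are odd), and noting that $u$ differs from $\alpha_\ell+\beta_m$ by at most $1$, the goal becomes a bound on $\alpha_\ell+\beta_m$.

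Next I expand the product digitally: $ab=\sum_{i,j}2^{\alpha_i+\beta_j}$ is a sum of $\ell m$ (not necessarily distinct) powers of two whose binary digit sum collapses to $3$. This is exactly the situation handled by the combinatorial key lemma of Section~\ref{th1sec}: a product of this type can have so few nonzero bits only if the exponents $\alpha_i+\beta_j$ lie in a rigid pattern with controlled gaps, the permissible gap size growing like roughly $2(k-1)$ in $k=s(ab)$. For $k=3$ the jumps are at most about $4$, and chaining them over the $\le\ell m$ exponents bounds $\alpha_\ell+\beta_m$ linearly in $\ell m$ with leading coefficient $4$ (against $2$ for $k=2$); the explicit additive constant $-13$ comes from the accumulated slack in the gap and counting estimates. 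The hypothesis $\max\{\ell,m\}\ge 3$ enters here: for $k=3$ the lemma permits exactly one genuinely unbounded configuration, namely $\ell=m=2$ with $a=b=2^{c}+1$, for which $ab=2^{2c}+2^{c+1}+1$ has $s(ab)=3$ for every $c\ge 2$, and this is precisely the case the hypothesis removes.

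The main obstacle is the case analysis forced by the middle bit $2^v$. In the $k=2$ situation every bit of $ab$ except $2^0$ and the top bit must cancel, which pins down the exponent pattern tightly; with $k=3$ there is one further surviving bit $2^v$, and one must track where $v$ falls relative to the ``bottom block'' of the product (governed by $\alpha_2,\beta_2$) and its ``top block'' (governed by $\alpha_{\ell-1},\alpha_\ell,\beta_{m-1},\beta_m$), splitting into branches according to whether $2^v$ is produced by a carry low down, high up, or in between. In each branch the delicate point is to verify that, once $\ell=m=2$ is ruled out, the rigid pattern supplied by the key lemma has bounded size, so that $\alpha_\ell+\beta_m$ cannot run off to infinity. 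Granting this, the bound $ab<2^{-13+4\ell m}$ follows by the same counting of exponents as in Theorem~\ref{th1}.
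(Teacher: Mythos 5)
Your proposal is a plan rather than a proof, and the mechanism you attribute to the key lemma is not what the lemma says. Lemma~\ref{keylemma} bounds the spread of the exponents by the \emph{cardinality} of the index set: if $s\bigl(\sum_{n\in\Lambda}2^{c_n}\bigr)=1$ then $|c_n-c_m|\le -2+\mbox{Card }\Lambda$, i.e.\ the permissible gap is of size roughly $\ell m$, not ``roughly $2(k-1)\approx 4$'' as you claim. Consequently your accounting (``jumps at most about $4$, chained over the $\le\ell m$ exponents, giving leading coefficient $4$'') does not correspond to any statement you can actually prove; in the paper the coefficient $4$ arises in the opposite way, by chaining \emph{at most four} gaps each of size $\approx\ell m$ (namely $a_1$, $a_\tau-a_1$, $a_{1+\tau}-a_\tau$, and $a_{\ell-1}+b_{m-1}-a_{1+\tau}$). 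With the lemma as actually stated, applying it naively is impossible anyway, because when the surviving middle bit $2^v$ is present the full exponent multiset does \emph{not} collapse to a single bit: one must first split the index set $\Xi$ into two parts $\Xi_1,\Xi_2$ with $\sum_{\Xi_1}2^{a_i+b_j}=2^y$ and $\sum_{\Xi_2}2^{a_i+b_j}=2^x$ and apply the lemma to each part separately. You gesture at ``branches according to where $v$ falls'' but never carry out any of them.

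The genuinely hard point, which your sketch does not address at all, is why the exponents cannot run off to infinity when $a_1=b_1$ (the case where a carry cancels a bit). The paper's argument is: after splitting the exponents of $a$ into a low block $A_1$ and a high block $A_2$ at the first gap exceeding $\approx\ell m$, one shows the gap $a_{1+\tau}-a_\tau$ is itself at most $-3+\ell m$, because otherwise writing $a=a'+2^{a_{1+\tau}}a''$ gives $a'b<2^{a_{1+\tau}}$ and hence $s(ab)=s(a'b)+s(a''b)\ge 2+2=4$, contradicting $s(ab)=3$. Nothing in your proposal supplies this (or any substitute) obstruction, and it is exactly the step where ``the rigid pattern has bounded size'' gets proved rather than asserted. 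Likewise, the hypothesis $\max\{\ell,m\}\ge 3$ is not used merely to excise the family $a=b=2^c+1$ by fiat; in the paper it enters concretely in the subcase where all $(i,0)$ and $(0,j)$ lie in $\Xi_1$ and $y<a_1+b_1$, forcing $2^x=\bigl(\sum_{i\ge1}2^{a_i}\bigr)\bigl(\sum_{j\ge1}2^{b_j}\bigr)$, which is impossible once one factor has at least two nonzero bits. As it stands, your argument has no complete branch, so the bound $ab<2^{-13+4\ell m}$ is not established.
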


An analogous result to Theorems~\ref{th1} and~\ref{th2}  does not hold for the case of $s(ab)=4$. In fact, we have the following: 

\begin{theorem}\label{th3}
For all integers $L\geq 1$ there exist integers $\ell,m\geq L$ such that there are infinitely many pairs $(a,b)$ of positive odd integers with 
$$s(a)=\ell, \quad s(b)=m, \quad \mbox{ and } \quad s(ab)=4.$$
\end{theorem}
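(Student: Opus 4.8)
The plan is to build, for each $L$, one pair $(\ell,m)$ with $\ell,m\ge L$ together with an infinite one-parameter family of solutions realizing it. The key observation reducing the problem: if $P=2^u+1$ factors as $P=g_1g_2$ with $g_1,g_2$ odd and $s(g_1),s(g_2)$ both large, then for every sufficiently large $N$ we have $P(2^N+1)=2^{u+N}+2^u+2^N+1$ with four distinct exponents, hence $s\big(P(2^N+1)\big)=4$; and setting $a=g_1$, $b=g_2(2^N+1)$ gives $s(a)=s(g_1)$ and, because the shift by $N$ separates the two copies of $g_2$, $s(b)=2\,s(g_2)$. So it suffices to exhibit one $P=2^u+1$ with a ``balanced'' factorization into odd digit-rich factors.

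I would produce such a $P$ from $x^3+1=(x+1)(x^2-x+1)$ used twice (with $x=2^n$, then $x=2^{3n}$): from $(2^n+1)(2^{2n}-2^n+1)=2^{3n}+1$ and $(2^{3n}+1)(2^{6n}-2^{3n}+1)=2^{9n}+1$ we get
\[
2^{9n}+1=(2^{2n}-2^n+1)\,(2^n+1)\,(2^{6n}-2^{3n}+1).
\]
Take $g_1:=2^{2n}-2^n+1$ and $g_2:=(2^n+1)(2^{6n}-2^{3n}+1)$, so $g_1g_2=2^{9n}+1$ with both factors odd; for $N>9n$ set $a:=g_1$, $b:=g_2(2^N+1)$. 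Then $ab=(2^{9n}+1)(2^N+1)=2^{9n+N}+2^{9n}+2^N+1$, whose exponents $0<9n<N<9n+N$ are distinct, so $s(ab)=4$.

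The remaining point is the digit-sum bookkeeping. One has $s(g_1)=n+1$ immediately, since $g_1$ in binary is a single block of $n$ ones plus the unit bit. The one computation needing attention is $s(g_2)$: expanding, $g_2=(2^{7n}+2^{6n})-(2^{4n}+2^{3n})+(2^n+1)$, three groups at well-separated scales with the middle one subtracted; here $(2^{7n}+2^{6n})-(2^{4n}+2^{3n})=2^{3n}(2^n+1)(2^{3n}-1)$, and $(2^n+1)(2^{3n}-1)=2^{4n}+2^{3n}-2^n-1$ has binary expansion consisting of the positions $0,\dots,n-1$, $n+1,\dots,3n-1$ and $4n$ (following the single borrow), so its digit sum is $3n$; adding back the strictly lower term $2^n+1$ gives $s(g_2)=3n+2$. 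Since $N>9n$ is larger than the bit-length of $g_2$ (which is at most $7n+1$), the two copies of $g_2$ in $b=2^Ng_2+g_2$ lie in disjoint bit-ranges, whence $s(b)=2s(g_2)=6n+4$.

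Finally pick $n\ge L$. Then $a,b$ are positive odd integers, $\ell:=s(a)=n+1\ge L$, $m:=s(b)=6n+4\ge L$, and the pairs $(a,b)$ coming from $N=9n+1,9n+2,\dots$ are pairwise distinct and all satisfy $s(a)=\ell$, $s(b)=m$, $s(ab)=4$. The only genuine obstacle is the carry/borrow analysis behind $s(g_2)=3n+2$ and verifying the few ``blocks do not overlap'' inequalities; the substantive idea is the reduction to a balanced factorization of a number of digit sum $2$, supplied by the factorization of $2^{9n}+1$.
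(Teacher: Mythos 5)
Your construction is essentially identical to the paper's: the same factorization $X^9+1=(X^2-X+1)(X^7+X^6-X^4-X^3+X+1)$ evaluated at $X=2^n$ (your $g_2$ is exactly the paper's $b_0$), the same choice $a=2^{2n}-2^n+1$, $b=b_0(2^N+1)$, and the same conclusion $s(ab)=s(2^{9n+N}+2^N+2^{9n}+1)=4$ with $\ell=n+1$, $m=2s(b_0)=6n+4$. The argument is correct; you merely make explicit the digit-sum bookkeeping ($s(b_0)=3n+2$ and the non-overlap of the two copies of $b_0$ in $b$) that the paper asserts without detail.
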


While it is not possible to bound the product $ab$ in this case, we can still bound $\min \{a,b\}$ in terms of 
$s(a)$ and $s(b)$. 

\begin{theorem}\label{th4}
Let $\ell, m\geq 3$ be integers, and $a, b\geq 1$ be odd integers with 
$s(a)=\ell$ and $s(b)=m$. If $s(ab)=4$, then we have 
\[
\min \{a,b\}<2^{18\ell m}.
\]
\end{theorem}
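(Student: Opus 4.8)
\emph{Strategy.} Assume without loss of generality that $a\le b$, so that $\min\{a,b\}=a$, and write $ab=2^{a_1}+2^{a_2}+2^{a_3}+1$ with $a_1>a_2>a_3>0$, the lowest bit being $1$ since $a$ and $b$ are odd. If $a_1<36\ell m$, then $a^{2}\le ab<2^{a_1+1}<2^{36\ell m}$, whence $a<2^{18\ell m}$; so the task is to control the situation in which $ab$ is long, $a_1\ge 36\ell m$, and to show that the smaller factor $a$ is nonetheless short.

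The engine is that a $4$-bit integer that is this long must contain a long run of zeros: one of the three gaps $a_3$, $a_2-a_3$, $a_1-a_2$ has length $\ge a_1/3\ge 12\ell m$, and splitting $ab$ at that gap writes $ab=2^{g}M_1+M_0$ with $g\ge 12\ell m$, $M_0<2^{g}$, and $s(M_1)+s(M_0)=4$, the pair $(s(M_1),s(M_0))$ being $(1,3)$, $(3,1)$ or $(2,2)$ according to the location of the gap. In the two unbalanced sub-cases one of the blocks is a single power of $2$; then reducing $ab$ modulo $2^{g}$, respectively dividing out the top power of $2$, forces $\ge g$ consecutive low (resp.\ high) bits of $b=ab/a$ to agree with a number having at most $m$ nonzero bits, and since $g$ is far larger than any admissible length this collapses the configuration to a divisibility relation between two integers with few nonzero bits. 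To this relation one applies the combinatorial Key Lemma of Section~\ref{th1sec}, exactly in the manner used for Theorems~\ref{th1} and~\ref{th2}, and reads off a bound on the length of $a$ of order $\ell m$, comfortably below $18\ell m$.

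\emph{The main difficulty} is the balanced sub-case $(s(M_1),s(M_0))=(2,2)$, that is $ab=2^{g}(2^{h}+1)+(2^{a_3}+1)$ with $g$ large. This is precisely the shape realised by the family built for Theorem~\ref{th3} (there $a$ divides $2^{a_3}+1$, while $b$ contributes the far-away bit $2^{g}$ together with a long low chunk whose product with $a$ collapses by carries to the two low bits), so here $ab$ is genuinely unbounded and one can only hope to bound $a$. The plan is to show that in this sub-case the smaller factor $a$ must divide one of the two Fermat-like blocks $2^{h}+1$ or $2^{a_3}+1$: reducing $ab$ modulo $2^{g}$ gives $ab\equiv 2^{a_3}+1\pmod{2^{g}}$, and tracking the carries produced when the binary expansion of $b$ is multiplied by $a$ — the two $2$-bit clusters of $ab$ occupying disjoint bit ranges because the middle gap is wide — pins the action of $a$ to a single cluster; then Theorem~\ref{th1} applied to the corresponding $2$-bit block bounds the length of $a$ by a quantity of order $\ell m$, and assembling the constants over the three sub-cases (together with the initial threshold $a_1<36\ell m$) yields the exponent $18\ell m$. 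Proving that $a$ must align with a single block — ruling out a more intricate collapse straddling both — is the delicate step, and it is also where the hypothesis $\ell,m\ge 3$ gets used to keep the case analysis clean.
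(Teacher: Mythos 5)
Your outline leaves the decisive step unproved, and the structural claim you base it on is false as stated. In the balanced sub-case $(s(M_1),s(M_0))=(2,2)$ you propose to show that the smaller factor $a$ ``must divide one of the two Fermat-like blocks $2^{h}+1$ or $2^{a_3}+1$'', the carries being ``pinned to a single cluster'' because the middle gap is wide. This is not true in general: take $a=7$ and $ab=2^{100}+2^{97}+2^{4}+1$. Since $2^{3}\equiv 1\pmod 7$ and all three exponents are $\equiv 1\pmod 3$, we have $7\mid ab$; the quotient $b=ab/7$ is odd with many nonzero bits, so $s(a)=3$, $s(b)=m\geq 3$, $s(ab)=4$, and the long gap is the middle one, yet $7$ divides neither $2^{3}+1=9$ nor $2^{4}+1=17$. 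The quotient $b$ has bits spread across the entire gap and the collapse to four bits happens globally, not cluster by cluster, so the intuition behind the alignment claim breaks down. Of course this example has $a$ small, so it does not contradict the theorem, but it shows that the statement you would need is not the one you formulated: you would have to prove alignment (or some substitute) \emph{under the additional hypothesis} $a\geq 2^{18\ell m}$, and that is precisely the content of the theorem in this case; nothing in the sketch supplies it. A similar gap affects the $(1,3)$ and $(3,1)$ sub-cases: invoking Theorem~\ref{th1} or Lemma~\ref{keylemma} requires control of the number of summands (equivalently, of the digit sums of the cofactors or partial quotients you create), and the sketch never establishes such control.

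For comparison, the paper does not argue through the gap structure of $ab$ at all. It assumes for contradiction that \emph{both} $a,b\geq 2^{18\ell m}$, clusters the exponents of $a$ and of $b$ into blocks whose internal differences are at most $\ell m$ and whose mutual gaps exceed $\ell m$ (so Lemma~\ref{lem:6-2} splits $s(ab)$ additively over well-separated pieces), and then rules out every admissible cluster pattern by counting ($s(T_1)+s(T_2)=3$ forces specific values) together with a parity argument modulo $4$; no divisibility or alignment statement is ever needed. If you want to salvage your route, the $(2,2)$ case would need an actual mechanism replacing the false alignment claim --- for instance a decomposition of $b$ into low and high parts with digit-sum bookkeeping as in the proof of~(\ref{eqn:3-8}) --- and you would then still need to track the constants to reach the exponent $18\ell m$.
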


\section{Key lemma and proof of Theorem~\ref{th1}}\label{th1sec}

The following is our key lemma that we will use throughout the paper.

\begin{lemma}\label{keylemma}
Let $\Lambda$ be a nonempty finite set.  For each $n\in \Lambda$, let $c_n$ be a nonnegative integer. Suppose that 
\[
s\left(\sum_{n\in\Lambda}2^{c_n}\right)=1.
\]
Then, for all $n,m\in \Lambda$, we have 
\[
|c_n-c_m|\leq \max\{0,-2+\mbox{Card } \Lambda\},
\]
where $\mbox{Card}$ denotes the cardinality.
\end{lemma}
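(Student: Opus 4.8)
The plan is to induct on $N := \operatorname{Card}\Lambda$. The base cases $N=1$ and $N=2$ are trivial: when $N=1$ the right-hand side is $\max\{0,-1\}=0$ and the only pair is $c_n=c_m$; when $N=2$ the bound is $0$, and indeed $s(2^{c}+2^{d})=1$ forces $c=d$ (if $c\neq d$ the binary expansion has exactly two ones). So assume $N\geq 3$ and that the lemma holds for all smaller nonempty sets. Write $\sigma := \sum_{n\in\Lambda}2^{c_n}$, so $s(\sigma)=1$, i.e. $\sigma = 2^{C}$ for some integer $C\geq 0$.

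The key idea is to strip off the lowest level. Let $c_{\min} = \min_{n\in\Lambda} c_n$ and let $\Lambda_0 = \{\,n\in\Lambda : c_n = c_{\min}\,\}$, with $t := \operatorname{Card}\Lambda_0 \geq 1$. Factoring out $2^{c_{\min}}$ we get $\sigma = 2^{c_{\min}}\bigl(t + \sum_{n\in\Lambda\setminus\Lambda_0} 2^{c_n - c_{\min}}\bigr)$, and since $s(\sigma)=1$ we must have $s\bigl(t + \sum_{n\notin\Lambda_0} 2^{c_n-c_{\min}}\bigr)=1$. If $\Lambda_0=\Lambda$ then $\sigma = t\cdot 2^{c_{\min}}$ has $s(\sigma)=s(t)=1$, but then all $c_n$ are equal and we are done. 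Otherwise write $t = \sum_{j} 2^{e_j}$ in binary with $s(t)$ terms (each $e_j < \lceil \log_2(t+1)\rceil \le \lceil \log_2(N+1)\rceil$, but more usefully $e_j \le \log_2 N$), so that $2^C = \sum_j 2^{e_j} + \sum_{n\notin\Lambda_0} 2^{c_n - c_{\min}}$. This exhibits $2^C$ as a sum of $s(t) + (N - t)$ powers of $2$; since $s(t)\le t$ wait — that goes the wrong way, so instead one argues directly on the top.

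A cleaner route, which I would actually pursue, is to strip off the **highest** level symmetrically. Since $\sigma = 2^C$, every carry must propagate all the way up. Consider the largest exponent $c_{\max}$ appearing, and group terms by whether $c_n = c_{\max}$. Actually the cleanest induction: observe that among the multiset $\{c_n\}_{n\in\Lambda}$, pick two indices $p,q$ achieving the same smallest value $c_{\min}$ (such a pair exists if $t\ge 2$; if $t=1$ then the single lowest bit survives uncancelled, forcing $s(\sigma)\ge 1$ with that bit at position $c_{\min}$, hence $C=c_{\min}$ and every other $2^{c_n}$ with $c_n>c_{\min}$ must sum to something with a zero bit at position $c_{\min}$, fine, but then $\sum_{n\neq p}2^{c_n} = 2^{c_{\min}}(2^{C'}\!-\!1)$-type relations are awkward). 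The robust move: if $t\ge 2$, replace the two terms $2^{c_{\min}}+2^{c_{\min}}$ by the single term $2^{c_{\min}+1}$, producing a new multiset $\Lambda'$ of size $N-1$ with the same sum $2^C$ and $s=1$; by induction all exponents in $\Lambda'$ differ by at most $N-3$, and since the new exponent $c_{\min}+1$ and the old surviving exponents span the gap, one recovers $|c_n-c_m|\le N-2$ for the original set after checking the new minimum didn't drop (it rose or stayed, so the spread grows by at most $1$). If $t=1$, then $c_{\min}=C$ is forced (the unique lowest bit is never cancelled), so actually $c_{\min}$ is the *maximum*, meaning $N=1$, contradiction with $N\ge 3$ — wait, that's the real crux.

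The main obstacle is precisely this last dichotomy: showing that the multiplicity $t$ of the minimal exponent is at least $2$ whenever $N\ge 2$ and not all exponents are equal. Indeed, if $t=1$, the bit at position $c_{\min}$ in $\sigma$ equals $1$ (no carry reaches it from below, nothing cancels it), so $c_{\min}$ is a nonzero-bit position of $\sigma$; since $s(\sigma)=1$, that position *is* $C$, so $c_{\min}=C\ge c_n$ for all $n$, whence $c_n=c_{\min}$ for all $n$ and $N=t=1$. Therefore for $N\ge 2$ with not-all-equal exponents we have $t\ge 2$, the reduction step applies, and the induction closes with the clean increment $N-3 \to N-2$. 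I would present the argument in exactly this order: base cases, the $t\ge 2$ lemma-within-the-proof via the "lowest surviving bit" observation, then the merge-two-equal-terms reduction and the bookkeeping on how the spread changes.
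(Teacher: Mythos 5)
Your final argument is correct, and it completes the lemma by a route that differs from the paper's. You share the paper's key observation: if the minimal exponent $c_{\min}$ occurred only once, its bit would survive (no carry reaches it from below), forcing $2^{C}=\sigma$ to have its unique nonzero bit at $c_{\min}$, which collapses everything to $N=1$; hence the minimal exponent has multiplicity $t\geq 2$. From there the paper argues directly that the carry generated at $c_{\min}$ must propagate all the way up to the maximal exponent and asserts that this ``directly implies the bound,'' i.e.\ it is a one-line carry-propagation count left informal. You instead run an induction on $\operatorname{Card}\Lambda$: merge two copies of $2^{c_{\min}}$ into $2^{c_{\min}+1}$, note the sum (hence the hypothesis $s=1$) is unchanged, apply the inductive bound $N-3$ to the smaller multiset, and observe that the maximum is untouched while the minimum rises by at most $1$ (exactly $1$ when $t=2$, $0$ when $t\geq 3$), so the original spread is at most $N-2$. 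This buys you a fully rigorous derivation of the numerical bound that the paper only sketches, and the extremal configuration $2^{c}+2^{c}+2^{c+1}+\cdots+2^{c+d}$ falls out of the same mechanism, explaining the paper's remark that the bound is sharp; the cost is a slightly longer argument and the need to phrase the induction over multisets of exponents (harmless, since the $c_n$ in the lemma need not be distinct). For a final write-up you should excise the exploratory false starts (the abandoned ``strip the lowest level'' and ``strip the highest level'' passages) and present only the chain: base cases $N\leq 2$, the multiplicity claim $t\geq 2$, the merge step, and the bookkeeping on the spread.
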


\begin{proof}
  Suppose $\mbox{Card } \Lambda\geq 2$ and let $c'=\min_n c_n$ and $c''=\max_n c_n$. If there were only one $n$ such that $c'=c_n$ then $s\left(\sum_{n\in\Lambda}2^{c_n}\right)\geq 2$ since the term $2^{c'}$ contributes to the sum with one bit and all the other terms at least with one bit, which is impossible. This means that, in order to have $s\left(\sum_{n\in\Lambda}2^{c_n}\right)=1$, the term $2^{c'}$ has to generate a carry that is transported as far as to interact with $2^{c''}$. This directly implies the bound (which is sharp in this general setting). 
\end{proof}

We now turn to the proof of Theorem~\ref{th1}. Recall that $m,\ell \geq 2$. Put 
\[
a=\sum_{i=0}^{\ell -1} 2^{a_i},\qquad  b=\sum_{j=0}^{m-1}2^{b_j},
\]
where 
$a_{\ell -1}>\cdots>a_1>a_0=0$, and $b_{m-1}>\cdots>b_1>b_0=0$. 
By $s(ab)=2$, we see that $ab$ can be written as 
\[
ab=2^x+1,
\]
for some $x>0$. Since carry propagation goes from the lower to the higher significant digits, we observe that $a_1=b_1$ since otherwise we would end up with at least three binary digits.  Put
\[
\Lambda=\left\{
(i,j)\mid 0\leq i\leq \ell -1, \; 0\leq j\leq m-1, \; (i,j)\ne (0,0)
\right\}.
\]
Since $\mbox{Card}\; \Lambda \geq 2$ we can apply Lemma~\ref{keylemma} with 
$c_{(i,j)}=a_i+b_j$ to get
\begin{align}\label{eqn:2-1}
0<(a_{\ell -1}+b_{m-1})-(a_1+0)\leq  -2+\mbox{Card } \Lambda =  -3+\ell m.
\end{align}
Similarly, 
\begin{align}\label{eqn:2-2}
0\leq (a_{\ell -1}+b_{m-1})-(a_1+a_1)\leq   -3+\ell m.
\end{align}
Therefore, by~(\ref{eqn:2-1}) and~(\ref{eqn:2-2}),
\[
2a_1\leq a_{\ell -1}+b_{m-1}\leq a_1-3+\ell m,
\]
and so 
\[
a_1\leq -3+\ell m.
\]
Using~(\ref{eqn:2-1}) again, we obtain 
\[
a_{\ell -1}+b_{m-1}\leq a_1-3+\ell m\leq -6+2\ell m.
\]
Since  $a<2^{1+a_{\ell-1}}$ and $b<2^{1+b_{m-1}}$,  we deduce that 
\[
ab<2^{2+a_{\ell -1}+b_{m-1}}\leq 2^{-4+2\ell m}, 
\]
which finishes the proof of Theorem~\ref{th1}.
\qed

\section{Proof of Theorem~\ref{th2}}\label{th2sec}
As before, we put 
\[
a=\sum_{i=0}^{\ell -1} 2^{a_i}, \qquad b=\sum_{j=0}^{m-1}2^{b_j},
\]
where 
$a_{\ell -1}>\cdots>a_1>a_0=0$, and $b_{m-1}>\cdots>b_1>b_0=0$. 
By $s(ab)=3$, the integer $ab$ can be written as
\[
ab=2^x+2^y+1,
\]
where $x>y>0$. 

We now distinguish two cases whether $a_1\ne b_1$ or $a_1= b_1$. The second case is more involved since the carry then cancels out one binary digit and we have to inspect the possible interactions in more detail.

\medskip

\textbf{--- The case $a_1\ne b_1$:}

\medskip

First we consider the case of $a_1\ne b_1$. Without loss of generality, we may assume that $a_1<b_1$. 
Again, as carries propagate from the lower to the higher significant digits, we have $y=a_1$. 
It is easily seen that $\ell\geq 3$. In fact, if $\ell=2$, then we have
\[2^x=\sum_{0\leq i\leq 1\atop 1\leq j\leq m-1}2^{a_i+b_j}=
\left(1+2^{a_1}\right)\left(\sum_{1\leq j\leq m-1}2^{b_j}\right),
\]
a contradiction. Suppose now $\ell \geq 3$. Since 
\[
2^x=\sum_{(i,j)\ne (0,0),(1,0)}2^{a_i+b_j},
\]
applying Lemma~\ref{keylemma} with 
\[
\Lambda=\left\{
(i,j)\mid 0\leq i\leq \ell -1,\; 0\leq j\leq m-1,\; (i,j)\ne (0,0), (1,0)
\right\}
\]
and $c_{(i,j)}=a_i+b_j$, we get 
\[
0<(a_{\ell -1}+b_{m-1})-\min\{a_2,b_1\}\leq  -2+\mbox{Card } \Lambda =  -4+\ell m.
\]
Thus, we see 
\[
2\min\{a_2,b_1\}\leq a_2+b_1\leq a_{\ell-1}+b_{m-1}\leq \min\{a_2,b_1\}-4+\ell m,
\]
and so $\min\{a_2,b_1\}\leq -4+\ell m$ and
\[
a_{\ell-1}+b_{m-1}\leq -8+2\ell m.
\]
Thus, we obtain 
\[
ab<2^{2+a_{\ell-1}+b_{m-1}}\leq 2^{-6+2\ell m}<2^{-13+4\ell m}.
\]

\medskip

\textbf{--- The case $a_1= b_1$:}

\medskip

In what follows, we assume that $a_1=b_1$.  If $a_{\ell-1}+b_{m-1}\leq a_1-4+\ell m$, then we get~(\ref{main}) in the same way as above. 
Thus, we may assume that 
\begin{align}\label{eqn:3-1}
a_{\ell-1}+b_{m-1}\geq a_1-3+\ell m.
\end{align}
Let 
\[
\Xi:=\left\{
(i,j)\mid 0\leq i\leq \ell -1,\; 0\leq j\leq m-1,\; (i,j)\ne (0,0)
\right\}.
\]
Consider
\begin{align}\label{eqn:3-2}
\left(
\sum_{i=0}^{\ell-1}2^{a_i}
\right)
\left(
\sum_{j=0}^{m-1}2^{b_j}
\right)
=1+\sum_{(i,j)\in \Xi} 2^{a_i+b_j}=
1+2^y+2^x.
\end{align}
By ordering all the sums $a_i+b_j$ in increasing order (there might be multiple equal terms), we can regroup several (smaller) sums to generate $2^x$ and all the other ones to generate $2^y$. More precisely, there exist nonempty subsets $\Xi_1,\Xi_2$ of $\Xi$ satisfying the following: 
\begin{enumerate}
\item[(i)] $\Xi=\Xi_1\cup \Xi_2$ is a disjoint union. 
\item[(ii)] \[2^y=\sum_{(i,j)\in \Xi_1} 2^{a_i+b_j}, \qquad 2^x=\sum_{(i,j)\in \Xi_2} 2^{a_i+b_j}.\]
\end{enumerate}
Note that this decomposition is in general not unique and that $\mbox{Card } \Xi_k\leq -2+\ell m$ for $k=1,2$. Using Lemma~\ref{keylemma}, we see for $k=1,2$ that 
\begin{align}
|(a_i+b_j)-(a_{i'}+b_{j'})|\leq \max\{0,-2+\mbox{Card }\Xi_k\}
\leq -4+\ell m
\label{eqn:3-3}
\end{align}
for all $(i,j),(i',j')\in \Xi_k$. In particular,~(\ref{eqn:3-1}) implies that 
\[
(\ell-1,m-1)\in \Xi_2, \qquad (1,0), (0,1)\in \Xi_1. 
\]
Now we add further assumptions on $\Xi_1$ and $\Xi_2$. In the case of $y\geq a_1+b_1$, and if necessary by changing the order of terms in~(\ref{eqn:3-2}), we may assume that $(1,1)\in \Xi_1$. In either case we therefore have (and note, for further reference)
\begin{align}\label{eqn:3-4}
y<a_1+b_1 \mbox{ or } (1,1)\in \Xi_1.
\end{align}

\medskip

In what follows, we consider the following three cases: 
\begin{itemize}
\item \textbf{Case 1:} $(i_0,0)\in \Xi_2$ for some $2\leq i_0\leq \ell-1$.
\item \textbf{Case 2:} $(0,j_0)\in \Xi_2$ for some $2\leq j_0\leq m-1$. 
\item \textbf{Case 3:} $(i,0)\in \Xi_1$ for all $1\leq i\leq \ell-1$ and $(0,j)\in \Xi_1$ for all $1\leq j\leq m-1$. 
\end{itemize}

\medskip

We first consider \textbf{Case 1}. Using~(\ref{eqn:3-3}), we have 
\[
a_{i_0}+b_{m-1}\leq a_{\ell-1}+b_{m-1}\leq a_{i_0}-4+\ell m,
\]
and so 
\begin{align}\label{eqn:3-5}
-4+\ell m \geq b_{m-1}\geq b_1=a_1, \qquad 2^{-3+\ell m}>b.
\end{align}
We divide the set $A:=\{a_i\mid 1\leq i\leq \ell-1\}$ as follows: 
\begin{align*}
A_1&:=\{a_i\mid a_i\leq a_1-4+\ell m\}=\{a_1<a_2<\cdots<a_{\tau}\},\\
A_2&:=\{a_i\mid a_i> a_1-4+\ell m\}=\{a_{1+\tau}<\cdots<a_{\ell-1}\}.
\end{align*}
Note that by this definition of $\tau$ we have
\begin{align}\label{eqn:3-6}
a_{\tau}-a_1\leq -4+\ell m. 
\end{align}
If $A_2$ is empty, then we have $\tau=\ell-1$. Thus, we see that
\[a<2^{1+a_{\ell-1}}\leq 2^{1+a_1-4+\ell m}\leq 2^{-7+2\ell m}\] by~(\ref{eqn:3-5}) and~(\ref{eqn:3-6}), which implies 
$ab<2^{-10+3\ell m}<2^{-13+4 \ell m}$.

Suppose now that $A_2$ is not empty. Using~(\ref{eqn:3-3}) again, we see 
\[\{(i,0)\mid \tau+1\leq i\leq \ell-1\}\cap \Xi_1=\emptyset,\]
and so 
\[\{(i,0)\mid \tau+1\leq i\leq \ell-1\}\subset \Xi_2.\]
In particular, again by~(\ref{eqn:3-3}),
\begin{align}\label{eqn:3-7}
a_{\ell-1}+b_{m-1}-a_{1+\tau}\leq -4+\ell m.
\end{align}
We claim that 
\begin{align}\label{eqn:3-8}
a_{1+\tau}-a_{\tau}\leq -3+\ell m.
\end{align}
Suppose on the contrary that $a_{1+\tau}-a_{\tau}\geq -2+\ell m$. 
Let 
\[
a':=\sum_{i=0}^{\tau} 2^{a_i}, \qquad a'':=\sum_{i=\tau+1}^{\ell-1}2^{a_i-a_{1+\tau}}.
\]
We observe that
\[
ab=a'b+a''b 2^{a_{1+\tau}}
\]
and by~(\ref{eqn:3-5}),
\[
a'b<2^{1+a_{\tau}}\cdot 2^{-3+\ell m}=2^{-2+\ell m+a_{\tau}}\leq 2^{a_{1+\tau}}.
\]
Therefore, since $s(b)\geq 2$, we get $s(ab)=s(a'b)+s(a''b)\geq 2+2=4$, a contradiction. Hence, we proved~(\ref{eqn:3-8}). 

We combine now~(\ref{eqn:3-5}),~(\ref{eqn:3-6}),~(\ref{eqn:3-8}), and~(\ref{eqn:3-7}) and deduce that 
\begin{align*}
&a_{\ell-1}+b_{m-1}
=
a_1+(a_{\tau}-a_1)+(a_{\tau+1}-a_{\tau})+(a_{\ell-1}+b_{m-1}-a_{1+\tau})\\
&\leq (-4+\ell m)+(-4+\ell m)+(-3+\ell m)+(-4+\ell m)=-15+4\ell m,
\end{align*}
which implies~(\ref{main}). 

In the same way as above, we can prove~(\ref{main}) in \textbf{Case 2}. 

In what follows, we consider \textbf{Case 3}. We recall~(\ref{eqn:3-4}). 
First, we suppose $y<a_1+b_1$. Then we see that for all $1\leq i\leq \ell-1$ and $1\leq j\leq m-1$ that 
$(i,j)\not\in \Xi_1$ by $y<a_i+b_j$. Hence, 
\begin{align*}
\Xi_1&=\{(i,0)\mid 1\leq i\leq \ell-1\}\cup\{(0,j)\mid 1\leq j\leq m-1\}, \\
\Xi_2&=\{(i,j)\mid 1\leq i\leq \ell-1, 1\leq j\leq m-1\}. 
\end{align*}
In particular, we get 
\[2^x=\left(\sum_{i=1}^{\ell-1}2^{a_i}\right)\left(\sum_{j=1}^{m-1}2^{b_j}\right),\]
which contradicts $(\ell,m)\ne (2,2)$. 

Now, suppose $(1,1)\in \Xi_1$. Since $(1,1),(1,0)\in \Xi_1$, we get by~(\ref{eqn:3-3}) and $a_1=b_1$ that 
\[(a_1+b_1)-a_1=a_1\leq -4+\ell m.\]
On the other hand, $(\ell-1,0),(1,0)\in \Xi_1$ implies 
\[a_{\ell-1}\leq a_1-4+\ell m\leq -8+2\ell m.\]
In the same way, by using $(0,m-1),(1,0)\in \Xi_1$, we get $b_{m-1}\leq -8+2\ell m$. 
Hence, we deduce 
\[
ab<2^{-14+4\ell m}. 
\]
This completes the proof of Theorem~\ref{th2}. \qed. 

\section{Proof of Theorem~\ref{th3}}\label{th3sec}
We prove the result via a specific construction (other constructions are possible, too). Put $f(X):=X^9+1$. Observe that 
\begin{align}
f(X)&=(X+1)(X^2-X+1)(X^6-X^3+1)\nonumber\\
&=(X^2-X+1)(X^7+X^6-X^4-X^3+X+1).\label{eqn:5-1}
\end{align}
For any positive integer $n$, set
\begin{align*} 
a_0&=a_0(n):=2^{2n}-2^n+1,\\
b_0&=b_0(n):=2^{7n}+2^{6n}-2^{4n}-2^{3n}+2^n+1.
\end{align*}
By~(\ref{eqn:5-1}) we have $a_0b_0=2^{9n}+1$. Since $s(a_0)=n+1$ and $s(b_0)=3n+2$ we see that for any positive integer $L$ and $n$ sufficiently large we have
\begin{align}
s(a_0)\geq L, \qquad s(b_0)\geq L.
\label{eqn:5-2}
\end{align}
 We now fix a positive integer $n$ satisfying~(\ref{eqn:5-2}). 
Put $\ell:=s(a_0)(\geq L)$ and $m':=s(b_0)(\geq L)$. For a positive integer $N$, we set 
\[a^{(N)}:=a_0, \quad b^{(N)}:=b_0(2^N+1)=2^Nb_0+b_0.\]
If $N$ is sufficiently large, depending on $\ell$ and $n$, then we get 
\begin{align*}
s(a^{(N)})=\ell, \qquad s(b^{(N)})=s(2^Nb_0)+s(b_0)=2s(b_0)=2m'=:m.
\end{align*}
Moreover, 
\begin{align*}
s(a^{(N)}b^{(N)})=s((2^N+1)a_0b_0)=s(2^{N+9n}+2^N+2^{9n}+1)=4.
\end{align*}
Hence, we obtain Theorem~\ref{th3}. \qed
\section{Proof of Theorem~\ref{th4}}\label{th4sec}

The proof of Theorem~\ref{th4} follows the same line of argument as used in Theorems~\ref{th1} and~\ref{th2}. We start off again with putting
\[
a=\sum_{i=0}^{\ell -1} 2^{a_i},\qquad  b=\sum_{j=0}^{m-1}2^{b_j},
\]
where 
$a_{\ell -1}>\cdots>a_1>a_0=0$, and $b_{m-1}>\cdots>b_1>b_0=0$. 
By $s(ab)=4$, we see that $ab$ can be written as 
\[
ab=2^{x_3}+2^{x_2}+2^{x_1}+1,
\]
where $x_3>x_2>x_1>0$. Let 
\[
\Xi:=\left\{
(i,j)\mid 0\leq i\leq \ell -1,\; 0\leq j\leq m-1,\; (i,j)\ne (0,0)
\right\}.
\]
Similarly as before, and by considering the carry propagation in the multiplication, we have 
\begin{align*}
\left(
\sum_{i=0}^{\ell-1}2^{a_i}
\right)
\left(
\sum_{j=0}^{m-1}2^{b_j}
\right)
=
2^{x_3}+2^{x_2}+2^{x_1}+1,
\end{align*}
and there exist nonempty subsets $\Xi_k$ ($k=1,2,3$) of $\Xi$ satisfying the following: 
\begin{enumerate}
\item[(i)] $\Xi=\Xi_1\cup \Xi_2\cup \Xi_3$ is a disjoint union. 
\item[(ii)] \[2^{x_k}=\sum_{(i,j)\in \Xi_k} 2^{a_i+b_j} \]
for $k=1,2,3$.
\end{enumerate}
From Lemma~\ref{keylemma} we deduce the following two direct facts that we will use in the sequel. 
\begin{lemma}\label{lem:6-1}
Let $i,i',j,j'$ be indices with $0\leq i,i'\leq \ell-1$ and $0\leq j,j'\leq m-1$. 
If $|(a_i+b_j)-(a_{i'}+b_{j'})|\geq \ell m$, then $(i,j)\in \Xi$ and $(i',j')\in \Xi$ belong to different $\Xi_k$ and $\Xi_{k'}$. 
\end{lemma}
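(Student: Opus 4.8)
The plan is to obtain Lemma~\ref{lem:6-1} as an immediate corollary of the key lemma, Lemma~\ref{keylemma}, applied separately to each of the parts $\Xi_1,\Xi_2,\Xi_3$. First I would record the crude size estimate on these parts: since $\Xi$ consists of all pairs $(i,j)$ except $(0,0)$, we have $\mbox{Card}\,\Xi=\ell m-1$, and since $\Xi_1,\Xi_2,\Xi_3$ are pairwise disjoint subsets of $\Xi$ this gives $\mbox{Card}\,\Xi_k\le \ell m-1$, hence $\max\{0,\mbox{Card}\,\Xi_k-2\}\le \ell m-3$ for $k=1,2,3$.

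Next I would argue by contraposition. Suppose that $(i,j)$ and $(i',j')$ lie in one and the same part $\Xi_k$. By construction $2^{x_k}=\sum_{(p,q)\in\Xi_k}2^{a_p+b_q}$, so $s(2^{x_k})=1$, and Lemma~\ref{keylemma} applies to the nonempty set $\Lambda=\Xi_k$ with $c_{(p,q)}=a_p+b_q$. It yields $|(a_i+b_j)-(a_{i'}+b_{j'})|\le \max\{0,\mbox{Card}\,\Xi_k-2\}\le \ell m-3<\ell m$, contradicting the hypothesis $|(a_i+b_j)-(a_{i'}+b_{j'})|\ge\ell m$. Hence the two pairs must lie in distinct parts $\Xi_k\ne\Xi_{k'}$; as each $\Xi_k\subseteq\Xi$, they both belong to $\Xi$ in particular. (Note the hypothesis already forces $a_i+b_j\ne a_{i'}+b_{j'}$, so automatically $(i,j)\ne(i',j')$.)

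The entire argument is a single application of Lemma~\ref{keylemma} together with the trivial bound $\mbox{Card}\,\Xi_k-2\le \ell m-3$, so there is essentially no obstacle. The only point worth flagging is the intentional slack in the statement: using the threshold $\ell m$ rather than the sharper $\ell m-2$ keeps the strict inequality $\ell m-3<\ell m$ comfortable, so the lemma remains applicable verbatim after one passes, as later steps in the proof of Theorem~\ref{th4} will, to regrouped or smaller subfamilies of $\Xi$.
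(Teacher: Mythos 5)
Your proof is correct and is exactly the deduction the paper intends: the authors state Lemma~\ref{lem:6-1} as a direct consequence of Lemma~\ref{keylemma}, and your contrapositive argument (apply the key lemma to $\Lambda=\Xi_k$ with $c_{(p,q)}=a_p+b_q$ and use $\mbox{Card}\,\Xi_k\leq \ell m-1$, so the gap is at most $\ell m-3<\ell m$) is precisely that deduction. Nothing further is needed.
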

\begin{lemma}\label{lem:6-2}
Let $S_1$ and $S_2$ be subsets of $\Xi$. Assume that, for all $(i,j)\in S_1$ and $(i',j')\in S_2$, we have 
\[
(a_i+b_j)-(a_{i'}+b_{j'})\geq \ell m.
\]
Then we have 
\begin{align*}
s\left(
\sum_{(i,j)\in S_1\cup S_2} 2^{a_i+b_j}
\right)
=
s\left(
\sum_{(i,j)\in S_1} 2^{a_i+b_j}
\right)
+
s\left(
\sum_{(i,j)\in S_2} 2^{a_i+b_j}
\right).
\end{align*}
\end{lemma}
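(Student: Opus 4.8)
The plan is to reduce the claimed identity to the elementary fact that two nonnegative integers whose nonzero binary digits occupy disjoint ranges of positions add without any carry interaction, so that their digit sums simply add. Write $P:=\sum_{(i,j)\in S_1}2^{a_i+b_j}$ and $Q:=\sum_{(i',j')\in S_2}2^{a_{i'}+b_{j'}}$. The subadditivity $s(P+Q)\le s(P)+s(Q)$ always holds (carries can only cancel nonzero bits), so the whole content lies in producing a clean separation of the bit ranges of $P$ and $Q$ and deducing equality.

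First I would isolate the threshold $L:=\min_{(i,j)\in S_1}(a_i+b_j)$. Every exponent occurring in $P$ is at least $L$, so $2^L$ divides $P$; equivalently $P$ has only zero digits in all binary positions below $L$. The hypothesis, applied to the pair realizing this minimum together with the pair realizing $U:=\max_{(i',j')\in S_2}(a_{i'}+b_{j'})$, gives $L-U\ge \ell m$, that is $U\le L-\ell m$, so every exponent occurring in $Q$ is at most $L-\ell m$.

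The crucial step is to bound $Q$ from above, since the exponents in $S_2$ may repeat and $Q$ could in principle carry upward past position $L$. Here I would use only a crude counting estimate: $S_2\subseteq\Xi$, so $Q$ is a sum of at most $\mbox{Card}\,\Xi=\ell m-1$ powers of two, each at most $2^{U}\le 2^{L-\ell m}$. Hence $Q\le(\ell m-1)\,2^{L-\ell m}<\ell m\cdot 2^{L-\ell m}\le 2^{\ell m}\cdot 2^{L-\ell m}=2^{L}$, where the last inequality is the elementary fact $\ell m<2^{\ell m}$. Thus $Q<2^L$, so $Q$ has only zero digits in all positions $\ge L$.

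Combining the two facts, the nonzero binary digits of $P$ live in positions $\ge L$ while those of $Q$ live in positions $<L$; these ranges are disjoint, so in $P+Q$ no carry ever propagates from the block contributed by $Q$ into the block contributed by $P$. Consequently the binary expansion of $P+Q$ is the concatenation of that of $Q$ (low part) with that of $P$ (high part), giving $s(P+Q)=s(P)+s(Q)$. The main subtlety to flag is exactly here: $P$ and $Q$ are not single bits but may carry internally, yet this is harmless because divisibility of $P$ by $2^L$ and the bound $Q<2^L$ hold regardless of internal carries. The only role of the precise gap $\ell m$ in the hypothesis is the counting inequality of the third paragraph, where the gap must dominate $\log_2(\mbox{Card}\,\Xi)$ so that the internal carries inside $Q$ cannot reach position $L$; any slack beyond $\log_2(\ell m)$ would already suffice.
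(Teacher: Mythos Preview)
Your argument is correct. The paper does not give an explicit proof of this lemma; it merely states that the result is a ``direct fact'' deduced from the key lemma (Lemma~\ref{keylemma}), and your write-up is the natural unpacking of that claim: the gap of size $\ell m$ between the exponent ranges of $S_1$ and $S_2$ dominates $\log_2(\mbox{Card }\Xi)=\log_2(\ell m-1)$, so the internal carries of $Q$ cannot reach position $L$, and the two bit-blocks concatenate without interaction. One small point worth making explicit is that the hypothesis forces $S_1\cap S_2=\emptyset$ (otherwise the gap condition with $(i,j)=(i',j')$ would give $0\ge\ell m$), so that $\sum_{S_1\cup S_2}2^{a_i+b_j}=P+Q$ really holds; and the degenerate cases $S_1=\emptyset$ or $S_2=\emptyset$ are trivial. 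With those remarks added, the proof is complete and in the same spirit as the paper's.
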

Let 
\[S(a):=\{a_i\mid 1\leq i\leq \ell-1\}, \qquad S(b):=\{b_j\mid 1\leq j\leq m-1\}.\]
We shall define the partition 
\[
S(a)=\bigcup_{i=1}^{r(a)}S(a;i), \qquad S(b)=\bigcup_{j=1}^{r(b)} S(b;j)
\]
of $S(a)$ and $S(b)$ into subsets such that any two elements in one subset have difference at most $\ell m$ (we will define $r(a)$ and $r(b)$ below). We define $\kappa(a;j)\in \N$ and $S(a;j)\subset S(a)$ 
inductively. First we set $\kappa(a;1):=1$ and 
\[
S(a;1):=\{a_i\mid a_{\kappa(a;1)}\leq a_i\leq \ell m+a_{\kappa(a;1)}\}.
\]
Suppose that $\kappa(a;j)$ and $S(a;j)$ ($j=1,2,\ldots,p$) are defined. If $$S(a)=\cup_{1\leq j\leq p}\; S(a;j),$$ then 
the process is terminated and we put $r(a):=p$. Otherwise, we denote the minimal element of 
$S(a)\backslash (\cup_{1\leq j\leq p}\; S(a;j))$ by $a_{\kappa(a;p+1)}$. Let 
\[
S(a;p+1):=\{a_i\mid a_{\kappa(a;p+1)}\leq a_i\leq \ell m+a_{\kappa(a;p+1)}\}.
\]
The process ends in at most $\ell$ steps. In fact, for any $1\leq p\leq r(a)$ the set $S(a;p)$ is not empty 
since $a_{\kappa(a;p)}\in S(a;p)$. Similarly, we define $\kappa(b;j)\in \N$ and $S(b;j)\subset S(b)$ for $j=1,2,\ldots,r(b)$. 
For all $1<p\leq r(a)$ and $1<q\leq r(b)$, we have 
\begin{equation}\label{rem:6-1}
a_{\kappa(a;p)}-a_{\kappa(a;p-1)}> \ell m
, \qquad
b_{\kappa(b;q)}-b_{\kappa(b;q-1)}> \ell m.
\end{equation}
The next lemma shows that the conditions on $s(ab)$ restricts the possible values of $(r(a),r(b))$ to a small set.
\begin{lemma}\label{lem:6-3}
We have $(r(a),r(b))\in \{(1,1),(1,2),(1,3),(2,1),(2,2),(3,1)\}$. 
\end{lemma}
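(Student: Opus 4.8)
The goal is to bound the number of ``blocks'' $r(a)$ and $r(b)$ into which the exponent sets $S(a)$ and $S(b)$ have been grouped. The plan is to show that if either $r(a)$ or $r(b)$ were too large, or if the pair $(r(a),r(b))$ were one of the excluded pairs $(2,3),(3,2),(3,3)$ (or anything larger), then $ab$ would be forced to have more than four nonzero bits, contradicting $s(ab)=4$. The main tool is Lemma~\ref{lem:6-2}: if two families of exponents $a_i+b_j$ are separated by a gap of at least $\ell m$, then the binary sum splits additively in $s$, and since each nonempty family contributes at least one nonzero bit, enough well-separated families force $s(ab)$ up.

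First I would record the basic gap estimate. If $a_i$ lies in block $S(a;p)$ and $a_{i'}$ in block $S(a;p')$ with $p<p'$, then $a_{i'}-a_i>\ell m$ by construction (each new block starts more than $\ell m$ above everything seen so far, by~(\ref{rem:6-1}) and the definition of the blocks); symmetrically for the $b$'s. Hence, fixing a reference $b_{\kappa(b;q)}$, the sums $a_i+b_{\kappa(b;q)}$ coming from blocks $S(a;1),\dots,S(a;r(a))$ fall into $r(a)$ ``super-blocks'' any two of which are separated by more than $\ell m$; combined across the $r(b)$ blocks of $S(b)$, and using that distinct $b$-blocks are themselves separated by more than $\ell m$ while each $a$-block has width at most $\ell m$, one gets that the full exponent multiset $\{a_i+b_j : (i,j)\in\Xi\cup\{(0,0)\}\}$ decomposes into a collection of clusters, with at least as many mutually $\ell m$-separated clusters as... here one has to be slightly careful, because the $(0,0)$ term and the cross terms $a_i+b_j$ with both indices positive can bridge gaps. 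The clean statement I would aim for: there are at least $\max\{r(a),r(b)\}$ pairwise $(\ge\ell m)$-separated nonempty subfamilies among the $a_i+b_j$, and moreover when $r(a)\ge 2$ and $r(b)\ge 2$ the top block $S(a;r(a))\times S(b;r(b))$, the term $1=2^{a_0+b_0}$, and intermediate mixed blocks yield still more separated families. Applying Lemma~\ref{lem:6-2} repeatedly then gives $s(ab)\ge$ (number of separated families), so $s(ab)=4$ forces that number to be at most $4$.

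Concretely: (a) $r(a)\le 3$ and $r(b)\le 3$, since $r(a)$ pairwise-separated $a$-blocks, each combined with $b_0=0$, give $r(a)$ separated families (the lowest of which also must absorb the $1$, but that only helps), so $s(ab)\ge r(a)$, hence $r(a)\le 4$; and a short refinement using that the family containing $2^{a_{\kappa(a;r(a))}+b_{m-1}}$ is separated from all lower ones pushes this to $r(a)\le 3$ — more simply, $r(a)=4$ would already need four separated families from the $(\cdot,0)$ column alone, leaving no room, and in fact $r(a)=4$ forces $s(a)\le s(ab)$-type contradictions; I will just show $r(a)\le 3$, $r(b)\le 3$ directly. (b) To kill $(3,3)$, $(3,2)$, $(2,3)$: when $r(a)\ge 2$ and $r(b)\ge 2$, consider the bottom-left corner giving the bit $1$, the families built from the highest $a$-block paired with low $b$'s versus low $a$-block paired with highest $b$-block versus the top-top block $a_{\kappa(a;r(a))}+b_{\kappa(b;r(b))}$; with $r(a)+r(b)\ge 5$ one exhibits $5$ pairwise $\ell m$-separated nonempty families among the $a_i+b_j$, so $s(ab)\ge 5$, a contradiction. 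This leaves exactly $\{(1,1),(1,2),(1,3),(2,1),(2,2),(3,1)\}$.

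The main obstacle is the bookkeeping in step~(b): the cross terms $a_i+b_j$ with $i$ from one $a$-block and $j$ from a non-extreme $b$-block can land between the clusters one wants to separate, so one must choose the reference exponents carefully and use the two-sided gap bounds of~(\ref{rem:6-1}) together with the width bound (each block has diameter $\le\ell m$) to check that the intervals $[a_{\kappa(a;p)}+b_{\kappa(b;q)},\ a_{\kappa(a;p)}+b_{\kappa(b;q)}+2\ell m]$ for the relevant corner pairs $(p,q)$ are genuinely pairwise disjoint with gaps $\ge \ell m$, so that Lemma~\ref{lem:6-2} applies. Once the right $5$ (respectively $4$) corner families are identified and their separation verified, the conclusion is immediate.
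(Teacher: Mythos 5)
Your overall strategy is the paper's: bound $r(a),r(b)\leq 3$ using the fact that distinct blocks are separated by more than $\ell m$, and then kill the cases where both $r(a),r(b)\geq 2$ and one of them equals $3$ by a ``zigzag'' of corner exponents $a_{\kappa(a;p)}+b_{\kappa(b;q)}$. The genuine gap is in the mechanism you use to convert separated families into a lower bound on $s(ab)$. You argue via repeated application of Lemma~\ref{lem:6-2}, i.e.\ ``$k$ pairwise $\ell m$-separated nonempty families $\Rightarrow s(ab)\geq k$''. But Lemma~\ref{lem:6-2} only applies when the families \emph{exhaust all} terms of the product and \emph{every} term of a higher family exceeds \emph{every} term of a lower one by at least $\ell m$; exhibiting a few well-separated corner families is not enough, because the remaining cross terms $a_i+b_j$ (top $a$-block with a middle $b$-block, etc.) can sit inside the gaps, and then no stacked partition into that many groups is available. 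You name this bridging problem yourself as ``the main obstacle'' and propose to handle it by checking that certain corner intervals are disjoint, but disjointness of the chosen intervals does not place the intermediate cross terms, so the key inequality $s(ab)\geq 5$ (and even $s(ab)\geq r(a)$ in your step (a)) is not established as written. A second, smaller flaw: you count the constant bit $1$ as one of your separated families, but $2^{0}$ need not be $\ell m$-separated from the lowest block (e.g.\ $a_1=1$); the correct way to account for it is that $ab-1=\sum_{(i,j)\in\Xi}2^{a_i+b_j}$ has all exponents positive, so $s(ab)=1+s(ab-1)$.

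The paper avoids the partition problem entirely by using Lemma~\ref{lem:6-1} instead: since each $\Xi_k$ has cardinality at most $\ell m-3$, Lemma~\ref{keylemma} bounds its internal exponent spread below $\ell m$, so any two exponents of $\Xi$ differing by at least $\ell m$ must lie in \emph{different} sets $\Xi_k$ --- and there are only three such sets. Hence $r(b)\leq 3$ follows from the $r(b)$ pairwise separated exponents $b_{\kappa(b;1)},\ldots,b_{\kappa(b;r(b))}$ (the pairs $(0,\kappa(b;p))$), with no need to place the other terms anywhere; and if $r(a)\geq 2$ and $r(b)=3$, the four exponents
$a_{\kappa(a;1)}+b_{\kappa(b;1)}$, $a_{\kappa(a;1)}+b_{\kappa(b;2)}$, $a_{\kappa(a;2)}+b_{\kappa(b;2)}$, $a_{\kappa(a;2)}+b_{\kappa(b;3)}$
are pairwise separated by more than $\ell m$ by~(\ref{rem:6-1}), so they would require four distinct $\Xi_k$, a contradiction. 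Your corner families are essentially these exponents, so the repair is to replace your use of Lemma~\ref{lem:6-2} (which needs global bookkeeping you have not done) by this counting-of-$\Xi_k$ argument, which needs none.
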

\begin{proof}
Without loss of generality we may assume that $r(a)\leq r(b)$. 
By the inequalities~(\ref{rem:6-1}) and by applying Lemma~\ref{lem:6-2} for the sets $S(b,p)$, $p=1,2,\ldots,r(b)$, we get
\[
4=s(ab)=1+s\left(
\sum_{(i,j)\in \Xi} 2^{a_i+b_j}
\right)
\geq 1+r(b),
\]
and so $r(b)\leq 3$. In particular, we verified the case of $r(a)=1$. \par
In what follows, we may assume that $r(a)\geq 2$. Suppose that $r(b)= 3$. 
Set 
\begin{align*}
&
y_1:=a_{\kappa(a;1)}+b_{\kappa(b;1)}
, \qquad
y_2:=a_{\kappa(a;1)}+b_{\kappa(b;2)}
, \\
&
y_3:=a_{\kappa(a;2)}+b_{\kappa(b;2)}
, \qquad
y_4:=a_{\kappa(a;2)}+b_{\kappa(b;3)}.
\end{align*}
The inequalities~(\ref{rem:6-1}) imply for any $2\leq p\leq 4$ that $y_p-y_{p-1}\geq \ell m$. Thus, we get by Lemma~\ref{lem:6-1} that 
\[
4=s(ab)=1+s\left(
\sum_{(i,j)\in \Xi} 2^{a_i+b_j}
\right)
\geq 1+4=5,
\]
a contradiction. Hence, we obtain $2=r(b)\geq r(a)\geq 2$, and so $r(a)=r(b)=2$. 
\end{proof}
We are now ready to prove Theorem~\ref{th4} by contradiction. Assume that $\min\{a,b\}\geq 2^{18\ell m}$, and so 
\begin{align}\label{eqn:6-1}
a_{\ell-1}\geq 18\ell m, \qquad b_{m-1}\geq 18\ell m.
\end{align}
Without loss of generality we may assume that 
\begin{align}\label{eqn:6-2}
b_{m-1}-b_1\geq a_{\ell-1}-a_1.
\end{align}
We divide the proof of Theorem~\ref{th4} into three cases that will be dealt with in the following three subsections: 
\begin{itemize}
\item{\textbf{Case 1:}} 
\begin{align}\label{eqn:6-3}
a_{\ell-1}-a_1\leq 4\ell m,\qquad b_{m-1}-b_1\leq 13\ell m.
\end{align}
\item{\textbf{Case 2:}}
\begin{align}\label{eqn:6-4}
a_{\ell-1}-a_1\leq 4\ell m,\qquad b_{m-1}-b_1> 13\ell m.
\end{align}
\item{\textbf{Case 3:}}
\begin{align}\label{eqn:6-5}
\mbox{$a_{\ell-1}-a_1> 4\ell m$.}
\end{align}
\end{itemize}

\subsection{Case 1: $a_{\ell-1}-a_1\leq 4\ell m$ and $b_{m-1}-b_1\leq 13\ell m$}
Let $i,i'$ be any indices with $1\leq i,i'\leq \ell-1$. Using~(\ref{eqn:6-1}) and~(\ref{eqn:6-3}), we have 
\begin{align}
&a_i\geq a_1\geq a_{\ell-1}-4\ell m\geq 14\ell m,\label{eqn:6-6}\\
& a_i-a_{i'}\geq -4\ell m.\label{eqn:6-7}
\end{align}
Similarly, for any $1\leq j,j'\leq m-1$, 
\begin{align}
&b_j\geq b_1\geq b_{m-1}-13\ell m\geq 5\ell m \label{eqn:6-8}, \\
&b_j-b_{j'}\geq -13\ell m. \label{eqn:6-9}
\end{align}
Putting 
\[
T_1:=\sum_{i=1}^{\ell-1} 2^{a_i}+\sum_{j=1}^{m-1} 2^{b_j}, \qquad 
T_2:=\sum_{i=1}^{\ell-1} \sum_{j=1}^{m-1} 2^{a_i+b_j},
\]
we see that $ab=1+T_1+T_2$. We claim by Lemma~\ref{lem:6-2} that $4=s(ab)=1+s(T_1)+s(T_2)$. 
In fact, let $1\leq i,i'\leq \ell-1$ and $1\leq j\leq m-1$. Then, using~(\ref{eqn:6-7}) and~(\ref{eqn:6-8}), we get 
\[
(a_i+b_j)-a_{i'}=(a_i-a_{i'})+b_j\geq -4\ell m+5\ell m=\ell m.
\]
Similarly,~(\ref{eqn:6-6}) and~(\ref{eqn:6-9}) imply that, for any $1\leq i\leq \ell-1$ and $1\leq j,j'\leq m-1$, 
$(a_i+b_j)-b_{j'}\geq 14\ell m-13\ell m=\ell m$. \par
Hence, we obtain $s(T_1)+s(T_2)=3$. Observing $s(T_2)\geq 2$ by $\ell\geq 3$ and $m\geq 3$, we get 
$s(T_1)=1$ and $s(T_2)=2$. By $s(T_2)=2$ and $\ell,m\geq 3$, there exists an integer $z\geq 2$ such that 
$T_2=2^{a_1+b_1}(1+2^z)$. 
In particular, 
\begin{align}
\sum_{i=1}^{\ell-1}2^{a_i-a_1} \sum_{j=1}^{m-1} 2^{b_j-b_1}\equiv 1\pmod4.
\label{eqn:6-10}
\end{align}
On the other hand, using $s(T_1)=1$ and $\ell,m\geq 3$, we see that $a_1=b_1$ and that 
one of the following holds: 
\begin{align*}
\begin{cases}
a_2=1+a_1\mbox{ and } b_2>1+a_1=1+b_1, \mbox{ or}\\
a_2>1+a_1\mbox{ and } b_2=1+a_1=1+b_1.
\end{cases}
\end{align*}
This implies 
\[\sum_{i=1}^{\ell-1}2^{a_i-a_1} \sum_{j=1}^{m-1} 2^{b_j-b_1}\equiv 3\pmod4, \]
which contradicts~(\ref{eqn:6-10}). 
This finishes the proof of Theorem~\ref{th4} for the setting of the first case.

\subsection{Case 2: $a_{\ell-1}-a_1\leq 4\ell m$ and $b_{m-1}-b_1> 13\ell m$}
In the same way as in the previous subsection, we get for any $1\leq i,i'\leq \ell-1$ that 
\begin{align}
a_i\geq 14\ell m, \qquad a_i-a_{i'}\geq -4\ell m.
\label{eqn:6-11}
\end{align}
We claim that there exists $2\leq p\leq r(b)$ such that 
\begin{align}
b_{\kappa(b;p)}-b_{-1+\kappa(b;p)}>5\ell m.
\label{eqn:6-12}
\end{align}
Suppose on the contrary that there does not exist $2\leq p\leq r(b)$ satisfying~(\ref{eqn:6-12}). 
Since $r(b)\leq 3$ by Lemma~\ref{lem:6-3}, we get 
\begin{align*}
b_{m-1}-b_1 &\leq r(b)\cdot \ell m+(r(b)-1)\cdot 5\ell m\\
&\leq 3\ell m +2\cdot 5\ell m=13\ell m,
\end{align*}
which contradicts~(\ref{eqn:6-4}). \par
We now take the maximal integer $p$ with $2\leq p\leq r(b)$ that satisfies~(\ref{eqn:6-12}). 
Put 
\begin{align*}
\Lambda_1&:=\{j\in \Z\mid 1\leq j\leq -1+\kappa(b;p)\}
,\\
\Lambda_2&:=\{j\in \Z\mid \kappa(b;p)\leq j\leq m-1\}.
\end{align*}
For any $j\in \Lambda_2$ and $j'\in \Lambda_1$, we have 
\begin{align}
b_j-b_{j'}\geq 5\ell m.
\label{eqn:6-13}
\end{align}
On the other hand, let $j,j'\in \Lambda_2$. Using $r(b)\leq 3$ again, we get 
\begin{align*}
|b_j-b_{j'}|&\leq (r(b)-1)\ell m+(r(b)-2)\cdot 5\ell m
\\
&\leq 2\ell m+5\ell m=7\ell m,
\end{align*}
and so 
\begin{align}
b_j-b_{j'}\geq -7\ell m. \label{eqn:6-14}
\end{align}
Putting 
\begin{align*}
T_1&:=\sum_{i=1}^{\ell-1} 2^{a_i}+\sum_{j\in \Lambda_1\cup\Lambda_2} 2^{b_j}
+\sum_{i=1}^{\ell-1}\sum_{j\in \Lambda_1} 2^{a_i+b_j}, \\
T_2&:=\sum_{i=1}^{\ell-1}\sum_{j\in \Lambda_2} 2^{a_i+b_j}, 
\end{align*}
we have $ab=1+T_1+T_2$. 
We claim by Lemma~\ref{lem:6-2} that $4=s(ab)=1+s(T_1)+s(T_2)$. 
Now we fix any indices $1\leq i\leq \ell-1$ and $j\in \Lambda_2$. 
First, for any $1\leq i'\leq \ell-1$, we get by~(\ref{eqn:6-11}) and~(\ref{eqn:6-13}) that 
\begin{align*}
(a_i+b_j)-a_{i'}=(a_i-a_{i'})+b_j\geq -4\ell m+5\ell m=\ell m.
\end{align*}
Similarly,~(\ref{eqn:6-11}) and~(\ref{eqn:6-14}) imply that, for any $j'\in \Lambda_1\cup \Lambda_2$, 
\[
(a_i+b_j)-b_{j'}\geq 14\ell m-7\ell m\geq \ell m.
\]
Moreover, if $1\leq i'\leq \ell-1$ and $j'\in \Lambda_1$, then~(\ref{eqn:6-11}) and~(\ref{eqn:6-13}) imply that 
\[
(a_i+b_j)-(a_{i'}+b_{j'})=(a_i-a_{i'})+(b_j-b_{j'})\geq \ell m. 
\]
Hence, we obtain $s(T_1)+s(T_2)=3$. On the other hand, we have $s(T_2)\geq 2$ by $\ell \geq 3$. 
Moreover, applying Lemma~\ref{lem:6-1} to $(0,j)$ with $j\in \Lambda_2$ and $(0,j')$ with $j'\in \Lambda_1$, 
we see that $s(T_1)\geq 2$ by~(\ref{eqn:6-13}), a contradiction. Hence, we verified the second case. 

\subsection{Case 3: $a_{\ell-1}> 4\ell m$}
Combining~(\ref{eqn:6-2}) and~(\ref{eqn:6-5}), we have 
\begin{align}
b_{m-1}-b_1\geq a_{\ell-1}-a_1> 4 \ell m,
\label{eqn:6-15}
\end{align}
and so $r(a)\geq 2$, $r(b)\geq 2$. Lemma~\ref{lem:6-3} implies $r(a)=r(b)=2$. For $1\leq p\leq 2$, we set 
\begin{align*}
\Lambda_p&:=\{i\mid 1\leq i\leq \ell-1, a_i\in S(a;p)\},\\
\Theta_p&:=\{j\mid 1\leq j\leq m-1, b_j\in S(b;p)\}.
\end{align*}
Observe that, for any $i\in \Lambda_2$ and $0\leq i'\leq \ell-1$, 
\begin{align}
a_i-a_{i'}\geq -\ell m.
\label{eqn:6-16}
\end{align}
Similarly, if $j\in \Theta_2$ and $0\leq j'\leq m-1$, then 
\begin{align}
b_j-b_{j'}\geq -\ell m.
\label{eqn:6-17}
\end{align}
Moreover, for any $i\in \Lambda_2$ and $i'\in \Lambda_1\cup\{0\}$, we see that 
\begin{align}
a_i-a_{i'}\geq 4\ell m-\ell m-\ell m=2\ell m,
\label{eqn:6-18}
\end{align}
by~(\ref{eqn:6-15}). Similarly, for any $j\in \Theta_2$ and $j'\in \Theta_1\cup\{0\}$, 
\begin{align}
b_j-b_{j'}\geq 2\ell m.
\label{eqn:6-19}
\end{align}
Putting 
\begin{align*}
T_1&:=
\sum_{i\in \Lambda_1\cup\Lambda_2} 2^{a_i}
+
\sum_{j\in \Theta_1\cup\Theta_2} 2^{b_j}
+
\sum_{i\in \Lambda_1} \sum_{j\in \Theta_2}2^{a_i+b_j}
+
\sum_{i\in \Lambda_2} \sum_{j\in \Theta_1}2^{a_i+b_j},\\
T_2&
:=
\sum_{i\in \Lambda_2} \sum_{j\in \Theta_2}2^{a_i+b_j},
\end{align*}
we see that $ab=1+T_1+T_2$. We claim by Lemma~\ref{lem:6-2} that $4=s(ab)=1+s(T_1)+s(T_2)$. 
Now we fix any indices $i\in \Lambda_2$ and $j\in \Theta_2$. First, for any 
$i'\in \Lambda\cup\{0\}$ and $0\leq j'\leq m-1$, we get by~(\ref{eqn:6-17}) and~(\ref{eqn:6-18}) that 
\[
(a_i+b_j)-(a_{i'}+b_{j'})\geq 2\ell m-\ell m=\ell m.
\]
Similarly, for any $0\leq i'\leq \ell-1$ and $j'\in \Theta_1\cup \{0\},$ we see by~(\ref{eqn:6-16}) and~(\ref{eqn:6-19}) that 
$(a_i+b_j)-(a_{i'}+b_{j'})\geq \ell m$. \par
Hence, we obtain $s(T_1)+s(T_2)=3$. On the other hand, applying Lemma~\ref{lem:6-1} to 
$(i,0)$ with $i\in \Lambda_2$ and $(i',0)$ with $i'\in \Lambda_1$, we get $s(T_1)\geq 2$ by~(\ref{eqn:6-18}). 
Consequently, we obtain $s(T_1)=2$ and $s(T_2)=1$. In particular, $\Lambda_2=\{\ell-1\}$ and $\Theta_2=\{m-1\}$ 
by $s(T_2)=1$. Setting 
$a':=\sum_{i=1}^{\ell-2} 2^{a_i}$ and $b':=\sum_{j=1}^{m-2} 2^{b_j}$, we see that 
\begin{align}
ab&=
1+(a'+b'+a'b')\nonumber\\
&\hspace{10mm}+(2^{a_{\ell-1}}+2^{b_{m-1}}+a'2^{b_{m-1}}+b'2^{a_{\ell-1}})+2^{a_{\ell-1}+b_{m-1}}\nonumber\\
&=1+2^{x_1}+2^{x_2}+2^{x_3},\label{eqn:6-20}
\end{align}
where $0<x_1<x_2<x_3$. Using Lemma~\ref{lem:6-2}, we shall prove the following: 
\begin{align}
\begin{cases}
2^{x_1}=a'+b'+a'b' , \\
2^{x_2}=2^{a_{\ell-1}}+2^{b_{m-1}}+a'2^{b_{m-1}}+b'2^{a_{\ell-1}}, \\
2^{x_3}=2^{a_{\ell-1}+b_{m-1}}.
\end{cases}
\label{eqn:6-21}
\end{align}
First, we observe that $a', 2^{a_{\ell-1}}$, and $2^{a_{\ell-1}+b_{m-1}}$
are subsums of $2^{x_1},2^{x_2}$, and $2^{x_3}$ in~(\ref{eqn:6-20}), respectively. 
In fact, for any $1\leq i\leq \ell-2$, we get $a_{\ell-1}-a_i\geq \ell m$ by~(\ref{eqn:6-18}), and 
$(a_{\ell-1}+b_{m-1})-a_{\ell-1}=b_{m-1}\geq \ell m$ by~(\ref{eqn:6-15}). 
Similarly, $b'$ and $2^{b_{m-1}}$ are subsums of $2^{x_1}$ and $2^{x_2}$ in~(\ref{eqn:6-20}), respectively. \par
For the proof of~(\ref{eqn:6-21}), it suffices to show that 
$a' 2^{b_{m-1}}$, $b' 2^{a_{\ell-1}}$, and $a'b'$ are subsums of 
$2^{x_2},2^{x_2}$, and $2^{x_1}$ in~(\ref{eqn:6-20}), respectively. 
We fix indices $i,i'$ with $1\leq i,i'\leq \ell-2$. Then we note that 
$(a_{\ell-1}+b_{m-1})-(a_i+b_{m-1})=a_{\ell-1}-a_i\geq \ell m$ by~(\ref{eqn:6-18}) and that 
\begin{align*}
(a_i+b_{m-1})-a_{i'}&=b_{m-1}+(a_i-a_{i'})\geq 4\ell m -\ell m\geq \ell m
\end{align*}
by~(\ref{eqn:6-15}) and $i,i'\in \Lambda_1(=\{1,2,\ldots,\ell-2\})$. Thus, 
$a' 2^{b_{m-1}}$ is a subsum of $2^{x_2}$ in~(\ref{eqn:6-20}). In particular, Lemma~\ref{lem:6-1} implies 
for any $1\leq i\leq \ell-2$ that 
\begin{align}a_i\leq \ell m \label{eqn:6-22}\end{align}
because both of $2^{b_{m-1}}$ and $a' 2^{b_{m-1}}$ 
are subsums of $2^{x_2}$ in~(\ref{eqn:6-20}). 
Similarly, $b' 2^{a_{\ell-1}}$ is a subsum of $2^{x_2}$ in~(\ref{eqn:6-20}) and $b_j\leq \ell m$ for any $1\leq j\leq m-2$. \par
Finally, let $1\leq i\leq \ell-2$ and $1\leq j\leq m-2$. Then~(\ref{eqn:6-19}) and~(\ref{eqn:6-22}) imply that 
\begin{align*}
b_{m-1}-(a_i+b_j)=(b_{m-1}-b_j)-a_i\geq 2\ell m-\ell m=\ell m.
\end{align*}
Hence, $a'b'$ is a subsum of $2^{x_1}$ in~(\ref{eqn:6-20}). This finishes the proof of~(\ref{eqn:6-21}). \par
Using~(\ref{eqn:6-21}), we get 
\begin{align}
1+2^{x_1}&=\left(1+\sum_{i=1}^{\ell-2} 2^{a_i}\right)
\left(1+\sum_{j=1}^{m-2} 2^{b_j}\right)\equiv 1\pmod4 \label{eqn:6-23}
\end{align}
by $\ell\geq 3$ and $m\geq 3$. On the other hand, using~(\ref{eqn:6-21}) again, we observe that 
\[
s\left(
2^{a_{\ell-1}}+2^{b_{m-1}}+a'2^{b_{m-1}}+b'2^{a_{\ell-1}}
\right)=s\left(
2^{x_2}
\right)=1,
\]
which implies that $a_{\ell-1}=b_{m-1}$ because $a'$ and $b'$ are even. Moreover, 
\[
1=s(2+a'+b')=
s\left(
2+\sum_{i=1}^{\ell-2} 2^{a_i}+\sum_{j=1}^{m-2} 2^{b_j}
\right).
\]
Thus, one of the following holds: 
\begin{align*}
\begin{cases}
a_1=1\mbox{ and }b_1\geq 2,\mbox{ or}\\
a_1\geq 2\mbox{ and } b_1=1.
\end{cases}
\end{align*}
Therefore, 
\[
\left(1+\sum_{i=1}^{\ell-2} 2^{a_i}\right)
\left(1+\sum_{j=1}^{m-2} 2^{b_j}\right)\equiv 3\pmod4 ,
\]
which contradicts~(\ref{eqn:6-23}). This concludes the proof of Theorem~\ref{th4}. \qed

\section*{Acknowledgement}
We would like to thank Prof. Shigeki Akiyama for fruitful discussions.

\bigskip

The first author is supported by JSPS KAKENHI Grant Number 19K03439. The second author is supported by the French PIA project ``Lorraine Universit\'e d'Excellence'', reference ANR-15-IDEX-04-LUE, and by the projects ANR-18-CE40-0018 (EST) and ANR-20-CE91-0006 (ArithRand).


\begin{thebibliography}{99}
\bibitem{BB14} M. Bennett, Y. Bugeaud, Perfect powers with three digits,
\textit{Mathematika} \textbf{60} (2014), no. 1, 66--84.
\bibitem{BBM13} M. Bennett, Y. Bugeaud, M. Mignotte, Perfect powers with few binary digits and related Diophantine problems, \textit{Ann. Sc. Norm. Super. Pisa Cl. Sci. (5)} \textbf{12} (2013), no. 4, 941--953.
\bibitem{BBM12} M. Bennett, Y. Bugeaud, M. Mignotte, Perfect powers with few binary digits and related Diophantine problems II, \textit{Math. Proc. Cambridge Philos. Soc.} \textbf{153} (2012), no. 3, 525--540.
\bibitem{Be81} F. Beukers, On the generalized Ramanujan-Nagell equation I., \textit{Acta Arith.} \textbf{38} (1981), 389--410. 
\bibitem{BLS75} J. Brillhart, D. H. Lehmer, J.L. Selfridge, New primality criteria and factorizations of $2^m\pm 1$, \textit{Math. Comp.} \textbf{29} (1975), 620--647.
\bibitem{BK18} Y. Bugeaud, H. Kaneko, On the digital representation of smooth numbers, \textit{Math. Proc. Cambridge Philos. Soc.} \textbf{165} (2018), no. 3, 533--540.
\bibitem{CZ13} P. Corvaja, U. Zannier, Finiteness of odd perfect powers with four nonzero binary digits, \textit{Ann. Inst. Fourier (Grenoble)} \textbf{63} (2013), no. 2, 715--731.
\bibitem{HLS11} K.G. Hare, S. Laishram, T. Stoll, The sum of digits of $n$ and $n^2$, \textit{Int. J. Number Theory} \textbf{7} (2011), no. 7, 1737--1752.
\bibitem{HLS11-2} K.G. Hare, S. Laishram, T. Stoll, Stolarsky's conjecture and the sum of digits of polynomial values, \textit{Proc. Amer. Math. Soc.} \textbf{139} (2011), no. 1, 39--49.
\bibitem{Lu04} F. Luca, The Diophantine equation $x^2=p^a\pm p^b+1$, \textit{Acta Arith.} \textbf{112} (2004), no. 1, 87--101.
\bibitem{MS14} M. Madritsch, T. Stoll, On simultaneous digital expansions of polynomial values, \textit{Acta Math. Hungar.} \textbf{143} (2014), no. 1, 192--200.
\bibitem{MMR19} B. Martin, C. Mauduit, J. Rivat, Propri\'et\'es locales des chiffres des nombres premiers, \textit{J. Inst. Math. Jussieu} \textbf{18} (2019), no. 1, 189--224.
\bibitem{MR10} C. Mauduit, J. Rivat, Sur un probl\`eme de Gelfond: la somme des chiffres des nombres premiers, \textit{Ann. of Math. (2)} \textbf{171}(3) (2010), 1591--1646.
\bibitem{Me15} S.-Y. Mei, The sum of digits of polynomial values, \textit{Integers} \textbf{15} (2015), Paper No. A32, 12 pp.
\bibitem{Me05} G. Melfi, On simultaneous binary expansions of $n$ and $n^2$, \textit{J. Number Theory} \textbf{111} (2005), 248--256.
\bibitem{Sa15} J.C. Saunders, Sums of digits in $q$-ary expansions, \textit{Int. J. Number Theory} \textbf{11} (2015), no. 2, 593--611.
\bibitem{St77} C.L. Stewart, On divisors of Fermat, Fibonacci, Lucas, and Lehmer numbers, \textit{Proc. London Math. Soc.} (3) \textbf{35} (1977), no. 3, 425--447.
\bibitem{St78} K.B. Stolarsky, The binary digits of a power, \textit{Proc. Amer. Math. Soc.} \textbf{71} (1978), 1--5.
\bibitem{Sz02} L. Szalay, The equations $2^n\pm 2^m \pm 2^l=z^2$, \textit{Indag. Math. (N.S.)} \textbf{13} (2002), no. 1, 131--142.
\end{thebibliography}
\end{document}